\documentclass[reqno,12pt,a4letter]{amsart}
\usepackage{amsmath, amsxtra, amssymb, latexsym, amscd, amsthm}
\usepackage{graphicx, color}
\usepackage[active]{srcltx}
\usepackage[utf8]{inputenc}
\usepackage[mathscr]{euscript}
\usepackage{mathrsfs,cite}
\usepackage[english]{babel}
\usepackage{color,enumerate,comment}
\usepackage[active]{srcltx}
\setlength{\oddsidemargin}{-.1cm}
\setlength{\evensidemargin}{-.1cm}
\def\disp{\displaystyle}
\setlength{\textwidth}{6.5in}
\setlength{\textheight}{9in}
\setlength{\headheight}{0in}
\setlength{\topmargin}{-1.7cm}
\setlength{\headsep}{1.25cm}
\setlength{\footskip}{.7in}
\setlength{\baselineskip}{12pt}

\newtheorem{theorem}{Theorem}[section]

\newtheorem{lemma}[theorem]{Lemma}

\theoremstyle{definition}
\newtheorem{definition}[theorem]{Definition}
\newtheorem{example}[theorem]{Example}
\newtheorem{remark}[theorem]{Remark}


\newcommand{\Limsup}{\mathop{{\rm Lim}\,{\rm sup}}}
\numberwithin{equation}{section}

\title[On AKKT optimality conditions for cone-constrained CCVPs]{On AKKT optimality conditions for cone-constrained vector optimization problems}

\author[N.V. Tuyen]{Nguyen Van Tuyen}
\address[N.V. Tuyen]{School of Mathematical Sciences, University of Electronic Science and Technology of China, Chengdu, P.R. China; Department of Mathematics, Hanoi Pedagogical University 2, Xuan Hoa, Phuc Yen, Vinh Phuc, Vietnam}
\email{\tt tuyensp2@yahoo.com; nguyenvantuyen83@hpu2.edu.vn}

\author[Y.-B. Xiao]{Yi-Bin Xiao}
\address[Y.-B. Xiao]{School of Mathematical Sciences, University of Electronic Science and Technology of China, Chengdu, P.R. China}
\email{\tt xiaoyb9999@hotmail.com}

\author[T.Q. Son]{Ta Quang Son$^*$}

\address[T.Q. Son]{Faculty of Applied Mathematics and Applications, Saigon University,  Hochiminh City, Vietnam}
\email{\tt taquangson@sgu.edu.vn}

\keywords{AKKT conditions, strict  constraint qualifications, cone-constrained vector optimization}

\subjclass[2010]{90C29, 90C46, 90C34}
\thanks{$^*$Corresponding.}
\thanks{The research of Nguyen Van Tuyen was supported by the Ministry of Education and Training of Vietnam [grant number B2018-SP2-14]  as well as the grant from School of Mathematical Sciences, University of Electronic Science and Technology of China, Chengdu, P.R. China. The research of Yi-bin Xiao  was supported by the National Natural Science Foundation of China (11771067).  The research of Ta Quang Son was supported by Vietnam National Foundation for Science and Technology Development (NAFOSTED) under grant number 101.01-2017.08.}
\date{\today}

\begin{document}

\begin{abstract}
In this paper, we introduce a kind of approximate Karush--Kuhn--Tucker condition (AKKT) for a smooth cone-constrained vector optimization problem. We show that, without any constraint qualification, the AKKT condition is a necessary for a local weak efficient solution of the considered problem. For convex problems, we prove that the AKKT condition is a necessary and sufficient optimality condition for a global weak efficient solution. We also introduce some strict  constraint qualifications associated with the AKKT condition.
\end{abstract}

\maketitle


\section{Introduction} \label{introduction}
In this paper, we are concerned with the following cone-constrained vector optimization problem:
 \begin{equation}\label{problem}
 \text{\rm Min}_{\,\mathbb{R}^m_+}\,\big\{f(x)\,:\,x\in \mathbb{R}^n,\; g(x)\in -\Theta \big\},\tag{CCVP}
 \end{equation}
 where $f$ is a mapping from $\mathbb{R}^n$ to $\mathbb{R}^m$, $g$ is a mapping from $\mathbb{R}^n$ to a Banach space $Y$, and $\Theta\subset Y$ is a closed  convex cone. When $m=1$, then \eqref{problem} is called a cone-constrained optimization problem. Since $\Theta$ is in an arbitrary Banach space $Y$, the problems of the form \eqref{problem} contain a wide class of problems in mathematical programming such as semi-infinite optimization problems, semidefinite optimization problems, second-order cone programs, and so on.   

It is well known that Karush--Kuhn--Tucker (KKT)  optimality conditions play an important role in both the theory and practice of constrained optimization and  are very relative to the class of important problems in mechanics and engineering, i.e., variational inequality problems, see, e.g., \cite{Facchinei2003,Kinderlehrer200,HXHW,LXH,SHX,SX2,XS3}. In order to obtain optimality conditions of KKT-type, constraint qualifications are
indispensable; see, e.g.,  \cite{Andreani2011,Bertsekas99,Giorgi16,Nocedal99,Tuyen18}.   A  constraint qualification (CQ) of \eqref{problem} is a condition that ensures that every  optimal solution of \eqref{problem} satisfies the KKT condition. In other words, usual necessary optimality conditions are of the form KKT or not-CQ. In the case of without CQ, sequential optimality conditions (or approximate  KKT optimality conditions) are considered. Recently, sequential optimality conditions  have been recognized to be useful in designing algorithms for finding approximate optimal solutions of nonlinear programming problems; see, e.g.,  \cite{Andreani2007,Andreani2010,Andreani16,Andreani-2018,Andreani-2018-2,Birgin2014,Chuong16,Dutta2010,Dutta2013,Feng2018,Haeser11,Haeser2015,Li15,Long18,Martinez2003}.

For convex cone-constrained optimization problems, there have been some
papers  in the literature dealing with sequential optimality conditions; see, e.g., \cite{Bai2008,Bot2008,Bot2008-2,Dinh2010,Jeyakumar-2003,Jeyakumar2006,Lee2007}. However,  to the best of our knowledge, there has been only one work \cite{Mordukhovich2014} concerning  sequential optimality conditions for nonconvex cone-constrained optimization problems.  Based on advanced constructions and techniques of variational analysis and generalized differentiation,  Mordukhovich {\em et al.} \cite{Mordukhovich2014} derived new necessary optimality conditions in fuzzy  form  for nonsmooth and nonconvex cone-constrained optimization problems. As a consequence of these fuzzy optimality conditions, the authors obtained optimality conditions of approximate KKT-type  without any constraint qualifications. 

In this paper, motivated and inspired by the work of Mordukhovich {\em et al.}  \cite{Mordukhovich2014}, we introduce  an approximate KKT condition  for cone-constrained vector optimization \eqref{problem}. We show that the approximate KKT condition is a necessary one for local weak efficient solutions. Under convexity assumptions, we  prove that this condition is also a sufficient optimality condition. We also introduce some strict  constraint qualifications under which the approximate KKT condition implies the KKT condition.

The paper is organized as follows. In Section \ref{Section2}, we recall some basic definitions and preliminaries from variational analysis and generalized differentiation. Section \ref{Section3} is devoted to presenting the main results.


\section{Preliminaries}\label{Section2}
We use the following notation and terminology. Fix $n \in {\mathbb{N}}:=\{1, 2, \ldots\}$. The space $\mathbb{R}^n$ is equipped with the usual scalar product and  Euclidean norm. 

For a Banach space $X$, the bracket $\langle \cdot\,, \cdot\rangle$ stands for the canonical pairing between space $X$ and its dual $X^*$. The weak$^\ast$ convergence in $X^\ast$ is denoted by  $\stackrel{w^\ast}\longrightarrow$. The topological closure, the topological interior  and the conic hull of a subset  $\Omega$ of $X$ are denoted, respectively, by  $\mathrm{cl}\,{\Omega}$, $\mathrm{int}\,{\Omega}$  and $\mathrm{cone}\,\Omega$. The closed ball with center $x$ and radius $\delta$ is denoted by $B(x, \delta)$.

Here, we recall the notions of the normal cones to  nonempty sets and the subdifferential of real-valued functions used in this paper. The reader is referred to \cite{mor06,Mordukhovich2018} for more details.  

\begin{definition}[{see \cite{mor06}}]{\rm Let $\Omega\subset\mathbb{R}^n$ and  $\bar x\in  \mbox{cl}\,\Omega$. The set
		\begin{equation*}
		N(\bar x; \Omega):=\{z^*\in \mathbb{R}^n\;:\;\exists
		x^k\stackrel{\Omega}\longrightarrow \bar x, z^*_k\to z^*,
		z^*_k\in {\widehat N}(x^k; \Omega),\ \ \forall k \in\mathbb{N}\},
		\end{equation*}
is called the {\em Mordukhovich/limiting normal cone}  of $\Omega$ at $\bar x$, where
		\begin{equation*}
		\widehat N (x; \Omega):= \bigg\{ {z^*  \in {\mathbb{R}^n} \;:\;\limsup_{u\overset{\Omega} \rightarrow x}
			\frac{{\langle z^* , u - x\rangle }}{{\parallel u - x\parallel }} \leq 0} \bigg\}
		\end{equation*}
		is the set of  {\em regular/Fr\'echet normals} of $\Omega$  at $x$ and  $u\xrightarrow {{\Omega}} x$ means that $u \rightarrow x$ and $u \in \Omega$.
	}
\end{definition}

\begin{definition}[{see \cite{mor06}}]{\rm Consider a function $\varphi \colon \mathbb{R}^n \to  \mathbb{R}$ and a point $\bar{x}\in\mathbb{R}^n$. The {\it Mordukhovich/limiting subdifferential}  of $\varphi$ at $\bar x$ is defined by
\begin{align*}
\partial \varphi (\bar x):=\{x^*\in \mathbb{R}^n \,:\, (x^*, -1)\in N((\bar x, \varphi (\bar x)); \mbox{epi}\varphi )\},
\end{align*}
where $\mbox{epi}\varphi$ is the epigraph of $\varphi$ and is given by
\begin{equation*}
\mbox{epi}\varphi:=\{(x, \alpha)\in\mathbb{R}^n\times\mathbb{R} \;:\;  \varphi(x)\leq \alpha\}.
\end{equation*}
}
\end{definition}
The following lemma will  be used in the next section.
\begin{lemma}[{see \cite[Theorem~3.46]{mor06}}]\label{Max-rule}
Let $\varphi_1,\ldots,\varphi_m\colon{\mathbb R}^n\rightarrow{\mathbb R}$ be continuously differentiable functions and $\bar{x}\in\mathbb{R}^n$. For each $x\in\mathbb{R}^n$, put 
$$\varphi(x):=\max_{1\le i\le m}\varphi_i(x).$$ 
Then, we have  
\begin{eqnarray*}
\partial\varphi(\bar{x})&\subset& \left\{\disp\sum_{i\in I(\bar{x})}\alpha_i\nabla\varphi_i(\bar{x})\;:\;\alpha_i\ge 0\quad\textrm{ and }\quad\disp\sum_{i\in I(\bar{x})}\alpha_i=1\right\},
\end{eqnarray*}
where   $I(\bar{x}):=\{i\in \{1, \ldots, m\}\,:\,\varphi_i(\bar{x})=\varphi(\bar{x})\}$.
\end{lemma}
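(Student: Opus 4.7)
The plan is to exploit the epigraphical structure of $\varphi$ and reduce the computation of $\partial\varphi(\bar x)$ to an intersection rule for limiting normal cones of smooth sets. The key point is that since each $\varphi_i$ is continuously differentiable, $\mbox{epi}\,\varphi_i$ is locally the epigraph of a $C^1$ function, and its (regular and hence limiting) normal cone at any boundary point is the half-line $\{t(\nabla\varphi_i(x),-1):t\ge 0\}$. This will let me replace a nonsmooth calculation about $\varphi$ by a linear-algebraic computation with half-spaces.

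First I would observe the identity $\mbox{epi}\,\varphi=\bigcap_{i=1}^m\mbox{epi}\,\varphi_i$, which holds because $\max_i\varphi_i(x)\le\alpha$ is equivalent to $\varphi_i(x)\le\alpha$ for every $i$. Then I would discard the inactive indices: for $i\notin I(\bar x)$ the strict inequality $\varphi_i(\bar x)<\varphi(\bar x)$ places $(\bar x,\varphi(\bar x))$ in the interior of $\mbox{epi}\,\varphi_i$, so this index contributes only the trivial normal and can be ignored. Only the family indexed by $I(\bar x)$ is relevant.

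Next I would apply the intersection formula for limiting normal cones in $\mathbb{R}^{n+1}$ (see, e.g., \cite[Corollary 3.5]{mor06}) to the sets $\{\mbox{epi}\,\varphi_i\}_{i\in I(\bar x)}$ at the point $(\bar x,\varphi(\bar x))$. The required normal qualification condition reduces to the implication that any nonnegative combination $\sum_{i\in I(\bar x)}t_i(\nabla\varphi_i(\bar x),-1)=0$ forces every $t_i=0$; reading the last coordinate yields $\sum_i t_i=0$ with $t_i\ge 0$, so the implication is automatic. The intersection rule then gives
$$N((\bar x,\varphi(\bar x));\mbox{epi}\,\varphi)\subset\Big\{\Big(\sum_{i\in I(\bar x)}t_i\nabla\varphi_i(\bar x),\,-\sum_{i\in I(\bar x)}t_i\Big):t_i\ge 0\Big\}.$$
Picking $x^*\in\partial\varphi(\bar x)$, by definition $(x^*,-1)$ lies in this cone, whence $x^*=\sum_{i\in I(\bar x)}t_i\nabla\varphi_i(\bar x)$ with $\sum_{i\in I(\bar x)}t_i=1$. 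Setting $\alpha_i:=t_i$ yields the claimed inclusion.

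The step I expect to cause most friction is checking the qualification condition underlying the intersection rule, but the rigidity of the $-1$ entry in $(x^*,-1)$ together with the sign of the $t_i$'s makes this a one-line verification. A more elementary alternative would bypass the intersection formula entirely: start from sequences $x^k\to\bar x$ and $x^*_k\in\widehat\partial\varphi(x^k)$ with $x^*_k\to x^*$, note that $I(x^k)\subset I(\bar x)$ eventually by continuity of the $\varphi_i$, apply a smooth max rule to compute $\widehat\partial\varphi(x^k)$ as convex combinations of $\nabla\varphi_i(x^k)$, and pass to the limit by compactness of the probability simplex. This avoids invoking the intersection rule at the price of more bookkeeping on sequences.
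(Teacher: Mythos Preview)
The paper does not supply a proof of this lemma at all: it is simply quoted from \cite[Theorem~3.46]{mor06} as a known result, with no argument given. So there is no ``paper's own proof'' to compare against.

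Your proposal is mathematically sound. The epigraphical identity $\mbox{epi}\,\varphi=\bigcap_i\mbox{epi}\,\varphi_i$, the reduction to active indices, the half-line description of $N((\bar x,\varphi(\bar x));\mbox{epi}\,\varphi_i)$ for smooth $\varphi_i$, and the verification of the qualification condition via the last coordinate are all correct, and the intersection rule from \cite{mor06} then yields exactly the stated inclusion. Your alternative sequential argument is also valid and is in fact closer to how such max rules are typically derived directly. Either route would serve as a self-contained proof, which is more than the paper itself offers here.
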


\section{Main results}\label{Section3}
Hereafter  we assume that $f\colon \mathbb{R}^n\to \mathbb{R}^m$ and $g\colon\mathbb{R}^n\to Y$ are continuously Fr\'echet differentiable  on $\mathbb{R}^n$. The derivatives of $f$ and $g$ at $x\in\mathbb{R}^n$ are denoted by $\nabla f(x)$ and $\nabla g(x)$, respectively. The adjoint operator of $\nabla g(x)$ is denoted by $\nabla g(x)^\ast$.    The  polar cone  of $\Theta$ is denoted by
\begin{equation*}
\Theta_+:=\{\mu\in Y^*\;:\; \langle \mu, \theta\rangle\geq 0, \ \ \forall \theta\in\Theta\}.
\end{equation*} 
Clearly, $\Theta_+$ is a weak$^\ast$ closed subset of $Y^*$. For convenience, let $\mathcal{F}$ denote the feasible set of \eqref{problem}, i.e.,
\begin{equation*}
\mathcal{F}:=\{x\in\mathbb{R}^n\;:\; g(x)\in -\Theta\}.
\end{equation*}

A point  $\bar x\in \mathcal{F}$ is called a {\em local weak efficient solution} of \eqref{problem} if and only if  there exists a neighborhood $U$  of $\bar x$ such that there is no $x\in U\cap \mathcal{F}$ satisfying   
$$f_i(x)<f_i(\bar x), \ \ \forall i=1, \ldots, m.$$
If $U=\mathbb{R}^n$, then we say that $\bar x$ is a {\em (global) weak efficient solution} of \eqref{problem}.

\begin{definition}\label{def-KKT}
{\rm We say that $\bar x\in\mathcal{F}$ satisfies the {\em KKT  condition} if and only if there exists a multiplier $(\lambda, \mu)\in\mathbb{R}^m_+\times\Theta_+$ such that
\begin{enumerate}[(i)]
\item $\sum_{i=1}^{m}\lambda_i=1$,

\item $\nabla g(\bar x)^*\mu+\sum_{i=1}^m\lambda_i\nabla f_i(\bar x)=0$,

\item $\langle \mu, g(\bar x)\rangle=0$.
\end{enumerate} 

}
\end{definition}

We now introduce the concept of approximate Karush--Kuhn--Tucker condition for
\eqref{problem} inspired by the work of Mordukhovich {\em et al.} \cite{Mordukhovich2014}. 
\begin{definition}
{\rm We say that $\bar x\in\mathcal{F}$ satisfies the {\em  approximate Karush--Kuhn--Tucker condition} (AKKT) if and only if there exist a vector $\lambda\in \mathbb{R}^m_+$ with $\sum_{i=1}^{m}\lambda_i=1$ and sequences $\{x^k\}\subset \mathbb{R}^n$ and $\{\mu^k\}\subset \Theta_+$ such that  
\begin{enumerate}
\item [(A0)] $x^k\to\bar x$,

\item [(A1)] $\nabla g(x^k)^* \mu^k+\sum_{i=1}^m\lambda_i\nabla f_i(\bar x)\to 0$,

\item [(A2)] $\langle \mu^k, g(x^k)\rangle\to 0$.
\end{enumerate}
}
\end{definition}
A sequence $\{x^k\}$ satisfying the above definition  will be called an AKKT sequence. We note here that the sequence of points $\{x^k\}$ is not required to be feasible.

The following result shows that the AKKT condition is  necessary for a feasible point to be a local weak efficient solution of \eqref{problem} without any constraint qualification.
\begin{theorem}\label{AKKT-Theorem}
If $\bar x$ is a local weak efficient solution of \eqref{problem}, then $\bar x$ satisfies the AKKT condition.
\end{theorem}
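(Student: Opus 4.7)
The plan is to use the standard penalization/smoothing strategy that proves AKKT-type results in scalar nonlinear programming, adapted to the vector/cone setting via max-scalarization. Introduce $\varphi(x):=\max_{1\le i\le m}(f_i(x)-f_i(\bar x))$; local weak efficiency of $\bar x$ means there is $\delta>0$ with $\varphi(x)\ge 0$ for every $x\in\overline B(\bar x,\delta)\cap\mathcal F$, and $\varphi(\bar x)=0$. For each $k\in\mathbb N$ I would consider the auxiliary problem of minimizing
\[
\Phi_k(x):=\varphi(x)+\tfrac{1}{2}\|x-\bar x\|^2+\tfrac{k}{2}\,d^2\bigl(g(x),-\Theta\bigr)
\]
over the compact set $\overline B(\bar x,\delta)$, which admits a minimizer $x^k$ by continuity and Weierstrass. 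The quadratic term is there to anchor the sequence at $\bar x$, and the $k$-scaled squared distance enforces asymptotic feasibility.

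From $\Phi_k(x^k)\le\Phi_k(\bar x)=0$ I would deduce, by the boundedness of $-\varphi(x^k)-\tfrac12\|x^k-\bar x\|^2$ on the compact ball, that $d(g(x^k),-\Theta)\to 0$, so every cluster point $\tilde x$ of $\{x^k\}$ lies in $\mathcal F$ by closedness of $-\Theta$. Passing to the limit in $\varphi(x^k)+\tfrac12\|x^k-\bar x\|^2\le 0$ and invoking $\varphi(\tilde x)\ge 0$ forces $\tilde x=\bar x$, hence $x^k\to\bar x$ and eventually $x^k\in\mathrm{int}\,B(\bar x,\delta)$. Fermat's rule then yields $0\in\partial\Phi_k(x^k)$; applying a subdifferential sum rule, Lemma~\ref{Max-rule} for the max-term, and the appropriate chain rule for the squared distance to the closed convex cone $-\Theta$, I would extract coefficients $\alpha_i^k\ge 0$ with $\sum_i\alpha_i^k=1$ (supported on $I(x^k)$) and a functional $\mu^k\in\Theta_+$ such that
\[
\sum_{i=1}^{m}\alpha_i^k\nabla f_i(x^k)+(x^k-\bar x)+\nabla g(x^k)^{*}\mu^k=0,
\]
together with $\langle\mu^k,g(x^k)\rangle\to 0$ (essentially $\langle\mu^k,g(x^k)\rangle\simeq k\,d^2(g(x^k),-\Theta)$, controlled above by $-2\varphi(x^k)-\|x^k-\bar x\|^2\to 0$). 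Compactness of the unit simplex gives $\alpha^k\to\lambda\in\mathbb R^m_+$ with $\sum_i\lambda_i=1$ along a subsequence, and continuity of $\nabla f_i$ combined with $x^k\to\bar x$ lets me replace $\nabla f_i(x^k)$ by $\nabla f_i(\bar x)$ up to a vanishing error, yielding (A0)--(A2).

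The main obstacle is the subdifferential calculus for $d^2(g(\cdot),-\Theta)$ in the Banach-space setting and the production of $\mu^k\in\Theta_+$ with the asymptotic complementarity. In a Hilbert space the gradient of $\tfrac12 d^2(\cdot,-\Theta)$ at $y$ equals $y-P_{-\Theta}(y)$, which lies in the normal cone $N(P_{-\Theta}(y);-\Theta)=\Theta_+$ (because $-\Theta$ is a cone) and satisfies $\langle y-P_{-\Theta}(y),y\rangle=d^2(y,-\Theta)$, giving both properties directly. For a general Banach space $Y$ I would instead invoke the support-function representation of $d(\cdot,-\Theta)$ together with Mordukhovich fuzzy-sum and scalarization rules in Asplund spaces, which replace the projection argument while still landing the multiplier in the polar cone and retaining the approximate complementarity.
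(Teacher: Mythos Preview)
Your approach is correct but proceeds differently from the paper's. Both arguments start identically: reduce to the scalar problem via $\varphi(x)=\max_i(f_i(x)-f_i(\bar x))$ and observe that $\bar x$ locally minimizes $\varphi$ over $\mathcal F$. At that point the paper simply invokes \cite[Corollary~5.1]{Mordukhovich2014}, which already furnishes a subgradient $x^*\in\partial\varphi(\bar x)$ together with sequences $\{x^k\}\subset\mathbb{R}^n$, $\{\mu^k\}\subset\Theta_+$ satisfying (A0)--(A2) for that $x^*$, and then applies Lemma~\ref{Max-rule} once to write $x^*=\sum_i\lambda_i\nabla f_i(\bar x)$. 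Your argument instead reconstructs the scalar AKKT result from scratch via the quadratic-penalty scheme $\Phi_k$, which is essentially the kind of variational argument underlying the cited corollary. The paper's proof is therefore much shorter by delegation; yours is self-contained and makes the origin of the sequences and the complementarity fully explicit.

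One technical remark on the obstacle you flag at the end: you do not need Asplund structure or fuzzy rules. The function $\tfrac12 d_{-\Theta}^2$ is convex on the Banach space $Y$, and for any $y\notin-\Theta$ every convex subgradient $\nu\in\partial d_{-\Theta}(y)$ satisfies $\nu\in\Theta_+$, $\|\nu\|=1$, and $\langle\nu,y\rangle=d_{-\Theta}(y)$ (all three follow from the subgradient inequality, using that $-\Theta$ is a cone and choosing almost-nearest points). Since $\nabla g(x)^*\colon Y^*\to\mathbb{R}^n$ is weak$^*$-to-norm continuous, the Clarke chain rule with the $C^1$ map $g$ then produces $\mu^k\in\Theta_+$ with $\langle\mu^k,g(x^k)\rangle=k\,d_{-\Theta}^2(g(x^k))\to 0$ exactly as in your Hilbert computation. (A minor slip: $N(P_{-\Theta}(y);-\Theta)=\Theta_+\cap\{P_{-\Theta}(y)\}^\perp$, not all of $\Theta_+$; only the inclusion in $\Theta_+$ is needed and holds.)
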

\begin{proof} By assumption, there exists $\delta>0$ such that, for all $x\in \mathcal{F}\cap B(\bar x, \delta)$, there exists $i\in \{1, \ldots, m\}$ satisfying
$f_i(x)\geq f_i(\bar x).$ 
This implies that
\begin{equation*}
\psi(x):=\max \{f_i(x)-f_i(\bar x)\;:\; i=1, \ldots, m\}\geq 0,\ \ \ \forall x\in \mathcal{F}\cap B(\bar x, \delta).
\end{equation*}
Consequently, $\bar x$ is a local minimum of the function  $\psi$ on $\mathcal{F}$. Clearly, $\psi$ is Lipschitz continuous around $\bar x$. Hence, by \cite[Corollary 5.1]{Mordukhovich2014}, there exist a subgradient $x^*\in\partial \psi(\bar x)$ and sequences $\{x^k\}\subset \mathbb{R}^n$ and $\{\mu^k\}\subset \Theta_+$ such that
\begin{equation*}
x^k\to \bar x, \nabla g(x^k)^*\mu^k+x^* \to 0, \ \ \text{and}\ \ \langle \mu^k, g(x^k)\rangle \to 0 \ \ \text{as} \ \ k \to \infty.
\end{equation*}
By Lemma \ref{Max-rule}, we have
\begin{equation*}
\partial \psi (\bar x) \subset \left\{\disp\sum_{i=1}^m\lambda_i\nabla f_i(\bar{x})\;:\;\lambda_i\ge 0\ \ \textrm{ and }\ \ \disp\sum_{i=1}^m\lambda_i=1\right\}.
\end{equation*}
Hence  there exists $\lambda\in \mathbb{R}^m_+$ with $\sum_{i=1}^{m}\lambda_i=1$ such that 
$$x^*=\sum_{i=1}^m\lambda_i\nabla f_i(\bar{x}),$$
as required.   
\end{proof}
In the following example, we consider a local weak efficient solution that does not satisfy the KKT condition. Let us then construct an AKKT sequence guaranteed to exist by  Theorem  \ref{AKKT-Theorem}.
\begin{example}
{\rm   
Consider the following problem:
\begin{align*}
	& \text{min}\, f(x):=(f_1(x), f_2(x))
	\\
	&\text{s.t.}\ \ x\in \mathcal{F}:=\{x\in\mathbb{R}^2\,:\, g(x)\in -\Theta\},
\end{align*} 
where $\Theta:=\mathbb{R}^3_+$ and
\begin{align*}
	f_1(x_1, x_2)&:= -3x_1-2x_2+3, f_2(x_1, x_2):=-x_1-3x_2+1,
	\\
	g(x_1, x_2)&:=(g_1(x_1, x_2), g_2(x_1, x_2), g_3(x_1, x_2)),
	\\
	g_1(x_1, x_2)&:=-x_1, g_2(x_1, x_2):=-x_2, g_3(x_1, x_2):=(x_1-1)^3+x_2.
\end{align*}
Let $\bar{x}=(1, 0)$. It is easy to check that $\bar{x}$ is a weak efficient solution, $\Theta_+=\Theta$ and
\begin{align*}
	\nabla f_1(\bar{x})&=(-3, -2), \nabla f_2(\bar{x})=(-1, -3),
	\\
	\nabla g_1(\bar{x})&=(-1, 0), \nabla g_2(\bar{x})=(0, -1), \nabla g_3(\bar{x})=(0, 1).
\end{align*}
Since
\begin{equation*}
	\begin{cases}
	\lambda_1\nabla f_1(\bar{x})+\lambda_2\nabla f_2(\bar{x})+\mu_2\nabla g_2(\bar{x})+\mu_3\nabla g_3(\bar{x})=0
	\\
	\mu_1g_1(\bar x)+\mu_2g_2(\bar x)+\mu_3 g_3(\bar x)=0
	\\
	\lambda_1, \lambda_2, \mu_2, \mu_3 \geq 0
	\end{cases}
\Leftrightarrow
	\begin{cases}
	\lambda_1=\lambda_2=0
	\\
	\mu_1=0, \mu_2=\mu_3 
	\\
	\mu_2, \mu_3\geq 0,
	\end{cases}
\end{equation*}
it follows that there is no multiplier  $(\lambda, \mu)\in \mathbb{R}_+^2\times\Theta_+$ satisfying conditions (i)--(iii) of Definition \ref{def-KKT}. This means that the KKT condition does not hold at $\bar x$.

We now check that the AKKT condition holds at $\bar x$. Let $\lambda=(\frac{1}{2}, \frac{1}{2})$.   We claim that there exist sequences $\{x^k\}\subset \mathbb{R}^2$ and  $\{\mu^k\}\subset \mathbb{R}^3_+$ satisfying conditions (A0)--(A2) with respect to $\lambda$. Indeed, let $x^k=(1+\frac{1}{k}, 0)$ and $\mu^k_1=0$ for all $k\in\mathbb{N}$. Then condition (A0) is satisfied and conditions (A1) and (A2) are equivalent to
\begin{equation}\label{A1-A2}
\begin{cases}
\mu^k_2 (0, -1) 
+\mu^k_3\left(\frac{3}{k^2}, 1\right)
+\left(-2, -\frac{5}{2}\right)\to (0,0),
\\
\frac{1}{k^3}\mu^k_3\to 0,
\end{cases}
\end{equation}
as $k\to\infty$. Thus, by letting $\mu^k_2=\frac{2k^2}{3}-\frac{5}{2}$ and $\mu^k_3=\frac{2k^2}{3}$ for all $k\geq 2$, we can check that  condition \eqref{A1-A2} holds. This means that sequences $\{x^k\}$ and  $\{\mu^k\}$ satisfy  conditions (A0)--(A2), as required. 
}
\end{example}

The next result shows that, for convex problems of the form \eqref{problem}, the AKKT condition is not only a necessary optimality condition but also a sufficient  one. Recall that problem \eqref{problem} is called {\em  convex} if and only if the functions $f_i, i=1, \ldots, m,$ are convex  and the mapping $g$ is $\Theta$-convex, i.e.,
\begin{equation*}
g(tx + (1 - t)y) - tg(x) - (1 - t)g(y) \in -\Theta,
\end{equation*}  
for all $x, y\in \mathbb{R}^n$ and $t\in [0, 1]$.

\begin{theorem} \label{Sufficient-AKKT-Theorem}
Assume that problem \eqref{problem} is convex and $\bar x\in\mathcal{F}$. Then, $\bar x$ is a weak efficient solution of \eqref{problem} if and only if $\bar x$ satisfies the AKKT condition.
\end{theorem}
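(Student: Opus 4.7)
The necessity direction is immediate from Theorem \ref{AKKT-Theorem}, since every global weak efficient solution is in particular a local weak efficient solution. The substantive direction is sufficiency, so the plan focuses on showing that AKKT at a feasible $\bar x$ forces $\bar x$ to be a global weak efficient solution under the convexity hypotheses. I will argue by contradiction: assume there exists $\tilde x \in \mathcal{F}$ with $f_i(\tilde x) < f_i(\bar x)$ for all $i = 1, \ldots, m$, and pit the convex inequalities against the AKKT sequences.

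First, convexity of each $f_i$ yields $\langle \nabla f_i(\bar x), \tilde x - \bar x\rangle \leq f_i(\tilde x) - f_i(\bar x) < 0$. Multiplying by $\lambda_i \geq 0$ and summing, using $\sum_i \lambda_i = 1$ so that at least one $\lambda_{i_0} > 0$, I obtain
\begin{equation*}
c := \sum_{i=1}^m \lambda_i \langle \nabla f_i(\bar x), \tilde x - \bar x\rangle < 0.
\end{equation*}
Setting $\epsilon^k := \nabla g(x^k)^*\mu^k + \sum_i \lambda_i \nabla f_i(\bar x)$, condition (A1) gives $\epsilon^k \to 0$, and therefore
\begin{equation*}
\langle \nabla g(x^k)^*\mu^k, \tilde x - x^k\rangle = -\sum_i \lambda_i \langle \nabla f_i(\bar x), \tilde x - x^k\rangle + \langle \epsilon^k, \tilde x - x^k\rangle \longrightarrow -c > 0,
\end{equation*}
using only $x^k \to \bar x$ and boundedness of $\tilde x - x^k$.

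On the other hand, $\Theta$-convexity of $g$ combined with Fréchet differentiability yields the cone-valued gradient inequality $g(\tilde x) - g(x^k) - \nabla g(x^k)(\tilde x - x^k) \in \Theta$. The derivation is standard: write the definition of $\Theta$-convexity along the segment from $x^k$ to $\tilde x$ with parameter $t$, divide by $t$, take $t \to 0^+$, and use that $\Theta$ is a closed convex cone. Pairing this inclusion with $\mu^k \in \Theta_+$ gives
\begin{equation*}
\langle \mu^k, \nabla g(x^k)(\tilde x - x^k)\rangle \leq \langle \mu^k, g(\tilde x)\rangle - \langle \mu^k, g(x^k)\rangle \leq -\langle \mu^k, g(x^k)\rangle,
\end{equation*}
where the last step uses $g(\tilde x) \in -\Theta$ and $\mu^k \in \Theta_+$. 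By (A2) the right-hand side tends to $0$, so $\limsup_k \langle \mu^k, \nabla g(x^k)(\tilde x - x^k)\rangle \leq 0$. Since this quantity equals $\langle \nabla g(x^k)^*\mu^k, \tilde x - x^k\rangle$ by the definition of the adjoint, it must converge to $-c > 0$ by the previous paragraph, a contradiction.

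The one step I expect to require the most care is the passage from the finite-difference form of $\Theta$-convexity to the pointwise cone inclusion $g(\tilde x) - g(x^k) - \nabla g(x^k)(\tilde x - x^k) \in \Theta$ in the Banach-space setting; I will spell out the $t \to 0^+$ limit and invoke closedness of $\Theta$ explicitly. Everything else is routine: the estimate on the $\mu^k$ pairings uses only the sign conditions built into $\Theta_+$, feasibility of $\tilde x$, and the two convergence statements (A1)--(A2), and the norms $\|\mu^k\|$ are never required to be bounded, precisely because the two expressions for $\langle \mu^k, \nabla g(x^k)(\tilde x - x^k)\rangle$ are contrasted on each side of the adjoint identity.
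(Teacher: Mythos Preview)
Your proof is correct and follows essentially the same approach as the paper: argue by contradiction from a point $\tilde x$ strictly dominating $\bar x$, combine the convexity inequality for each $f_i$ with the cone-gradient inequality coming from $\Theta$-convexity of $g$, and use (A0)--(A2) to pass to the limit. The only difference is organizational: the paper bundles all terms into a single residual $\sigma_k$, shows $\sigma_k\to 0$, and obtains $\sum_i\lambda_i f_i(\hat x)\geq\sum_i\lambda_i f_i(\bar x)$ at the end, whereas you isolate the quantity $\langle \nabla g(x^k)^\ast\mu^k,\tilde x-x^k\rangle$ and show it must simultaneously tend to $-c>0$ and have $\limsup\leq 0$; the ingredients and logic are the same.
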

\begin{proof} Thanks to Theorem \ref{AKKT-Theorem}, we only need to prove the ``if'' part. Arguing by contradiction, assume that $\bar x$ satisfies the AKKT condition but $\bar x$ is not a weak efficient solution of \eqref{problem}. Hence  there exists $\hat{x}\in\mathcal{F}$ such that
\begin{equation}\label{not-efficient}
f_i(\hat{x})<f_i(\bar x), \ \ \forall i=1, \ldots, m.
\end{equation}
By the AKKT condition, there exist $\lambda\in\mathbb{R}^m_+$ with $\sum_{i=1}^{m}\lambda_i=1$, $\{x^k\}\subset \mathbb{R}^n$ and $\{\mu^k\}\subset \Theta_+$ satisfying conditions (A0)--(A2). Since $f_i$, $i=1,\ldots, m$, are convex functions, we have
\begin{equation}\label{fi-convex}
f_i(\hat{x})\geq f_i(\bar x)+ \langle \nabla f(\bar x), \hat{x}-\bar{x}\rangle.
\end{equation}
By the $\Theta$-convexity of $g$, it is easily seen that $\langle \mu^k, g(\cdot)\rangle$ is a convex function. Hence 
\begin{equation}\label{mk-g-convex}
\langle \mu^k, g(\hat{x})\rangle\geq \langle \mu^k, g(x^k)\rangle+\langle \nabla g(x^k)^\ast\mu^k, \hat{x}-x^k\rangle,  \ \ \forall k\in\mathbb{N}. 
\end{equation}
Multiplying \eqref{fi-convex} by $\lambda_i$ and adding up then gives, from \eqref{mk-g-convex} and the facts that $\hat{x}\in\mathcal{F}$  and $\mu^k\in\Theta_+$, that
\begin{align}
\sum_{i=1}^{m}\lambda_i f_i(\hat{x})&\geq \sum_{i=1}^{m}\lambda_i f_i(\hat{x})+\langle \mu^k, g(\hat{x})\rangle\notag
\\
&\geq\sum_{i=1}^{m}\lambda_i (f_i(\bar x)+ \langle \nabla f(\bar x), \hat{x}-\bar{x}\rangle)+\langle \mu^k, g(x^k)\rangle+\langle \nabla g(x^k)^\ast\mu^k, \hat{x}-x^k\rangle\notag
\\
&=\sum_{i=1}^{m}\lambda_i f_i(\bar x)+\sigma_k \label{sum-k}
\end{align}
for all $k\in\mathbb{N}$, where 
$$\sigma_k:=\langle \mu^k, g(x^k)\rangle+\sum_{i=1}^{m}\lambda_i\langle \nabla f(\bar x), \hat{x}-\bar{x}\rangle+\langle \nabla g(x^k)^\ast\mu^k, \hat{x}-x^k\rangle.$$
We claim that $\sigma_k\to 0$ as $k$ tends to infinity. Indeed, for each $k\in\mathbb{N}$, we have
\begin{equation*}
\sigma_k=\langle \mu^k, g(x^k)\rangle+\left\langle \nabla g(x^k)^\ast\mu^k+ \sum_{i=1}^{m}\lambda_i\nabla f_i(\bar x), \hat{x}-\bar{x}\right\rangle-\langle\nabla g(x^k)^\ast\mu^k, x^k-\bar{x}\rangle.
\end{equation*}
By conditions (A1) and (A2), the two first terms of $\sigma_k$ converge to $0$ as $k\to\infty$. Moreover, from condition (A1) it follows that the sequence $\{\nabla g(x^k)^\ast\mu^k\}$ is bounded. This and condition (A0) imply that  the last term of $\sigma_k$ also tends to $0$. Hence  $\sigma_k\to 0$ as $k$ tends to infinity, as required.

Now, taking $k\to\infty$ in \eqref{sum-k} yields 
\begin{equation}\label{sum-ge}
\sum_{i=1}^{m}\lambda_i f_i(\hat{x})\geq \sum_{i=1}^{m}\lambda_i f_i(\bar x).
\end{equation}
From $\sum_{i=1}^{m}\lambda_i=1$ and  \eqref{not-efficient} it follows that
\begin{equation*}
\sum_{i=1}^{m}\lambda_i f_i(\hat{x})< \sum_{i=1}^{m}\lambda_i f_i(\bar x),
\end{equation*}
contrary to \eqref{sum-ge}. The proof is complete.
\end{proof}

We now introduce some strict constraint qualifications of \eqref{problem}.   Recall that a property P is called a {\em strict constraint qualification}  (SCQ) of  \eqref{problem} if and only if the implication
\begin{equation}\label{strict-CQ}
\mathrm{AKKT+P}\ \ \Rightarrow\ \ \mathrm{KKT} 
\end{equation}
is true; see \cite[Definition 3.3]{Birgin2014}. SCQs are important because they are sufficient conditions to guarantee that limits of AKKT sequences are KKT points. Furthermore, since every local weak efficient solution  satisfies the AKKT condition, the property \eqref{strict-CQ} shows that every SCQ is also a constraint qualification of \eqref{problem}. 

The first SCQ of \eqref{problem} is as follows.
\begin{definition}
{\rm 
We say that $\bar x\in\mathcal{F}$ satisfies the {\em bounded approximate Karush--Kuhn--Tucker  condition}  (BAKKT) if and only if there exist a vector $\lambda\in \mathbb{R}^m_+$ with $\sum_{i=1}^{m}\lambda_i=1$ and sequences $\{x^k\}\subset \mathbb{R}^n$ and $\{\mu^k\}\subset \Theta_+$ such that conditions (A0)--(A2) hold and the sequence $\{\mu^k\}$ is bounded.   
}
\end{definition}

\begin{theorem}\label{BAKKT-Theorem}  If  $\bar x\in\mathcal{F}$ satisfies the BAKKT condition, then so does the KKT condition. (That is, the BAKKT condition is an SCQ.)
\end{theorem}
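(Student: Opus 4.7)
The plan is to extract a weak$^*$ cluster point $\mu$ of the bounded sequence $\{\mu^k\}$ in $Y^*$ and verify that $(\lambda,\mu)$ fulfils conditions (i)--(iii) of Definition~\ref{def-KKT}. Since $\{\mu^k\}$ lies in a norm-bounded set of $Y^*$, the Banach--Alaoglu theorem produces a weak$^*$ convergent subnet (without loss of generality, a subsequence, as will suffice here) $\mu^k\stackrel{w^\ast}\longrightarrow\mu$ for some $\mu\in Y^*$. Because $\Theta_+$ is weak$^*$ closed, the limit satisfies $\mu\in\Theta_+$, and the multiplier $\lambda\in\mathbb{R}^m_+$ inherited from the BAKKT condition already provides (i).

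For condition (iii), I would write the splitting
\[
\langle \mu^k,g(x^k)\rangle=\langle \mu^k,g(\bar x)\rangle+\langle \mu^k,g(x^k)-g(\bar x)\rangle.
\]
The first term converges to $\langle\mu,g(\bar x)\rangle$ by weak$^*$ convergence, and the second is bounded in absolute value by $\|\mu^k\|\,\|g(x^k)-g(\bar x)\|$, which vanishes because $\{\mu^k\}$ is bounded and $g$ is continuous at $\bar x$. Combining with (A2) gives $\langle\mu,g(\bar x)\rangle=0$, which is (iii).

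For condition (ii), the main task is to show $\nabla g(x^k)^\ast\mu^k\to\nabla g(\bar x)^\ast\mu$ in $\mathbb{R}^n$; once this is established, (A1) forces $\nabla g(\bar x)^\ast\mu+\sum_{i=1}^m\lambda_i\nabla f_i(\bar x)=0$. Testing against an arbitrary $v\in\mathbb{R}^n$, I would split
\[
\langle\nabla g(x^k)^\ast\mu^k,v\rangle=\langle\mu^k,\nabla g(\bar x)v\rangle+\langle\mu^k,(\nabla g(x^k)-\nabla g(\bar x))v\rangle.
\]
The first term tends to $\langle\mu,\nabla g(\bar x)v\rangle=\langle\nabla g(\bar x)^\ast\mu,v\rangle$ by weak$^*$ convergence, and the second is controlled by $\|\mu^k\|\,\|\nabla g(x^k)-\nabla g(\bar x)\|\,\|v\|$, which tends to zero because $g$ is continuously Fr\'echet differentiable and $\{\mu^k\}$ is bounded. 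Thus $\nabla g(x^k)^\ast\mu^k\to\nabla g(\bar x)^\ast\mu$ componentwise, and (ii) follows.

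The one point that requires care is precisely this passage to the limit in $\nabla g(x^k)^\ast\mu^k$: weak$^*$ convergence of $\mu^k$ alone would not suffice against the varying test element $\nabla g(x^k)v$, so the argument genuinely needs both the boundedness of $\{\mu^k\}$ guaranteed by BAKKT and the continuity of the derivative $\nabla g$. This is exactly where BAKKT strengthens AKKT to an SCQ in the sense of \eqref{strict-CQ}.
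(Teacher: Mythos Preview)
Your proof is correct and follows the same route as the paper's: extract a weak$^*$ cluster point of the bounded sequence $\{\mu^k\}$ via Banach--Alaoglu (the paper cites \cite[Theorem~3.16]{Brezis}), note that $\Theta_+$ is weak$^*$ closed, and then pass to the limit in (A1) and (A2). You are actually more careful than the paper, which simply writes ``taking the limit'' without justifying why $\nabla g(x^k)^\ast\mu^k\to\nabla g(\bar x)^\ast\mu$ and $\langle\mu^k,g(x^k)\rangle\to\langle\mu,g(\bar x)\rangle$; the only technical caveat---shared by the paper---is that in an arbitrary Banach space one obtains a weak$^*$-convergent \emph{subnet} rather than a subsequence, but your splitting estimates go through verbatim along nets.
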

\begin{proof} By assumption, there exist $\lambda\in\mathbb{R}^m_+$ with $\sum_{i=1}^{m}\lambda_i=1$, $\{x^k\}\subset \mathbb{R}^n$ and $\{\mu^k\}\subset \Theta_+$ such that $x^k\to \bar x$ and
\begin{align}
&\nabla g(x^k)^* \mu^k+\sum_{i=1}^m\lambda_i\nabla f_i(\bar x)\to 0,\label{BAKKT-1}
\\
&\langle \mu^k, g(x^k)\rangle\to 0,\label{BAKKT-2}
\end{align} 
and the sequence $\{\mu^k\}$ is bounded. Thanks to \cite[Theorem 3.16]{Brezis}, without  loss of generality, we may assume that $\mu^k\stackrel{w^\ast}\longrightarrow \mu$ as $k\to\infty$. Since $\Theta_+$ is weak$^\ast$ closed, one has $\mu\in\Theta_+$. Taking the limit in \eqref{BAKKT-1} and \eqref{BAKKT-2} as $k\to\infty$, we obtain 
$$\nabla g(\bar x)^*\mu+\sum_{i=1}^m\lambda_i\nabla f_i(\bar x)=0\ \ \text{and} \ \ \langle \mu, g(\bar x)\rangle=0.$$ 
This means that $\bar x$ satisfies the KKT condition.  The proof is complete. 
\end{proof}
We next show that if the cone $\Theta$ is dually compact, then the Robinson constraint qualification is a sufficient condition for the BAKKT condition.
\begin{definition}[{see \cite{Robinson}}]
{\rm We say that $\bar x\in\mathcal{F}$ satisfies the {\em  Robinson constraint qualification} (RCQ) if and only if  
$$0\in \mathrm{int}\left[g(\bar x)+\nabla g(\bar x)(\mathbb{R}^n)+\Theta\right].$$  
}
\end{definition}  
\begin{remark}
{\rm Thanks to \cite[Proposition 2.95]{Bonnans}, the RCQ is equivalent to the following condition
\begin{equation}\label{RCQ}
\nabla g(\bar x)(\mathbb{R}^n)+\mathrm{cone}\,(\Theta+g(\bar{x}))=Y. 
\end{equation}
Moreover, if $\Theta$ has a nonempty interior, then by \cite[Lemma 2.99]{Bonnans} the RCQ is equivalent to the Mangasarian--Fromovitz constraint qualification:
$$\exists d\in \mathbb{R}^n\;:\;g(\bar x)+\nabla g(\bar x)d\in -\mathrm{int}\,\Theta.$$ 
}
\end{remark}

\begin{definition}[{see \cite[Definition 3.1]{Ng-2005}}]   
{\rm 
We say that a closed convex cone $\Theta\subset Y$ is {\em  dually compact} if and only if there exists a compact subset $K$ of $Y$ such that 
\begin{equation*}
\Theta_+\subset \mathcal{W}(K),
\end{equation*}
where
\begin{equation*}
\mathcal{W}(K):=\left\{y^\ast\in Y^\ast\;:\; \|y^\ast\|\leq \sup\{\langle y^\ast, y\rangle\;:\; y\in K\}\right\}.
\end{equation*}
}
\end{definition}
The following remark summarizes some important facts of the dually compactness.  For more information, the readers are referred to \cite{Ng-2005,Durea2011} and the references therein. 
\begin{remark}\label{Remark-dually-compact}
{\rm 
\begin{enumerate}[(i)]
\item The cone $\Theta$ is dually compact if and only if there exists a finite subset $P:=\{y_1, y_2, \ldots, y_p\}\subset Y$ such that
\begin{equation*}\label{finte-dually-compact}
\Theta_+\subset  \left\{y^\ast\in Y^\ast\;:\; \|y^\ast\|\leq \max\{\langle y^\ast, y_i\rangle\;:\; y_i\in P\}\right\}.
\end{equation*}

\item The cone $\Theta$ is dually compact if and only if $\Theta_+$ is weak$^\ast$ locally compact.

\item If $\mathrm{dim}\,Y<\infty$ or $\mathrm{int}\,\Theta\not=\emptyset$, then $\Theta$ is dually compact. The converse does not hold in general. For example,  the  cone
\begin{equation*}
\Theta:=\left\{(0, \theta_1, \theta_2, \ldots)\in l_2 \;:\; \sum_{k=2}^{\infty}\theta_k^2\leq +\infty\right\}
\end{equation*}
has an empty interior, but it is dually compact. 

\item If $\Theta$ is dually compact, then
\begin{equation*}\label{dually-compact}
\mu^k\stackrel{w^\ast}\longrightarrow 0 \Leftrightarrow \mu^k\to 0 \ \ \text{for any (generalized) sequence}\ \ \{\mu^k\}\subset \Theta_+.
\end{equation*}
\end{enumerate}
}
\end{remark}
 
\begin{theorem}\label{MFCQ-AKKT} Assume that $\Theta$ is dually compact.  Let $\bar x\in\mathcal{F}$ be such that the AKKT condition and the RCQ are satisfied.  Then the BKKT condition holds and so does the KKT condition. 
\end{theorem}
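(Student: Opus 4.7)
The plan is to prove the BAKKT condition by showing that the multiplier sequence $\{\mu^k\}$ furnished by the AKKT condition is already bounded, and then invoke Theorem \ref{BAKKT-Theorem} to obtain the KKT conclusion. I would argue by contradiction: suppose $\|\mu^k\|\to\infty$ along a subsequence (not relabelled), and set $\nu^k:=\mu^k/\|\mu^k\|\in\Theta_+$, so $\|\nu^k\|=1$.

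First, I would transfer the AKKT information to the normalized multipliers. Dividing condition (A1) by $\|\mu^k\|$ and using that the finite-dimensional term $\sum_i\lambda_i\nabla f_i(\bar x)$ is constant yields $\nabla g(x^k)^*\nu^k\to 0$. Since $x^k\to\bar x$ and $g$ is continuously Fr\'echet differentiable, $\nabla g(x^k)^*\to\nabla g(\bar x)^*$ in operator norm, and combined with $\|\nu^k\|=1$ this gives $\nabla g(\bar x)^*\nu^k\to 0$. Dividing (A2) by $\|\mu^k\|$ produces $\langle\nu^k,g(x^k)\rangle\to 0$, and the continuity of $g$ then upgrades this to $\langle\nu^k,g(\bar x)\rangle\to 0$.

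Next, I would exploit dually compactness of $\Theta$: by Remark~\ref{Remark-dually-compact}(ii), $\Theta_+$ is weak$^\ast$ locally compact, so the norm-bounded net $\{\nu^k\}\subset\Theta_+$ has a weak$^\ast$ cluster point $\nu\in\Theta_+$ (passing to a subnet if necessary). Passing to the limit in the two displays above gives $\nabla g(\bar x)^*\nu=0$ and $\langle\nu,g(\bar x)\rangle=0$. Crucially, Remark~\ref{Remark-dually-compact}(iv) forbids $\nu=0$: if $\nu^k\stackrel{w^*}\to 0$ then $\|\nu^k\|\to 0$, contradicting $\|\nu^k\|=1$. Thus $\nu\neq 0$.

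The final step is to derive a contradiction using the RCQ in the form \eqref{RCQ}, namely $\nabla g(\bar x)(\mathbb{R}^n)+\mathrm{cone}\,(\Theta+g(\bar x))=Y$. Given any $y\in Y$, write $y=\nabla g(\bar x)d+t(\theta+g(\bar x))$ with $d\in\mathbb{R}^n$, $t\geq 0$, $\theta\in\Theta$; then
\begin{equation*}
\langle\nu,y\rangle=\langle\nabla g(\bar x)^*\nu,d\rangle+t\langle\nu,\theta\rangle+t\langle\nu,g(\bar x)\rangle=t\langle\nu,\theta\rangle\geq 0,
\end{equation*}
because $\nabla g(\bar x)^*\nu=0$, $\langle\nu,g(\bar x)\rangle=0$, and $\nu\in\Theta_+$. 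Applying this to both $y$ and $-y$ forces $\langle\nu,y\rangle=0$ for every $y\in Y$, i.e.\ $\nu=0$, contradicting $\nu\neq 0$. Hence $\{\mu^k\}$ is bounded, BAKKT holds, and Theorem~\ref{BAKKT-Theorem} delivers KKT. The main obstacle I anticipate is a technical one: handling the weak$^\ast$ limit extraction carefully (using subnets, since $Y$ need not be separable) and verifying that $\nabla g(x^k)^\ast \nu^k\to 0$ correctly transfers to $\nabla g(\bar x)^\ast \nu=0$ in the weak$^\ast$ limit; the operator-norm convergence of $\nabla g(x^k)^\ast$ to $\nabla g(\bar x)^\ast$ combined with boundedness of $\{\nu^k\}$ is what makes this step go through.
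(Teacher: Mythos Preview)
Your proposal is correct and follows essentially the same route as the paper: normalize the multipliers, extract a weak$^\ast$ cluster point in $\Theta_+$, use Remark~\ref{Remark-dually-compact}(iv) to rule out the zero limit, and then derive a contradiction from the RCQ in the form \eqref{RCQ}. If anything, your write-up is a bit more careful on the technical points (subnets versus subsequences, and the explicit use of operator-norm convergence of $\nabla g(x^k)^\ast$) than the paper's own proof.
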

\begin{proof} Since $\bar x$ satisfies the AKKT condition, there exist $\lambda\in\mathbb{R}^m_+$ with $\sum_{i=1}^{m}\lambda_i=1$, $\{x^k\}\subset \mathbb{R}^n$ and $\{\mu^k\}\subset \Theta_+$ such that $x^k\to \bar x$ and
\begin{align}
&\nabla g(x^k)^* \mu^k+\sum_{i=1}^m\lambda_i\nabla f_i(\bar x)\to 0,\label{MFCQ-1}
\\
&\langle \mu^k, g(x^k)\rangle\to 0.\label{MFCQ-2}
\end{align} 
We claim that the sequence $\{\mu^k\}$ is bounded. Indeed, if otherwise, then we may assume without  loss of generality that $\|\mu^k\|\to \infty$ as $k\to\infty$. For each $k\in\mathbb{N}$, put $\tilde{\mu}^k:=\frac{\mu^k}{\|\mu^k\|}$. Then, $\|\tilde{\mu}^k\|=1$ for all $k\in\mathbb{N}$.  Selecting a subsequence if necessary we may assume that $\{\tilde{\mu}^k\}$ converges  weakly$^\ast$ to some $\tilde{\mu}\in\Theta_+$. By the dually compactness, we see that  $\tilde\mu$ is not null. Indeed, if otherwise, then $\mu^k\to 0$ in norm due to Remark \ref{Remark-dually-compact}(iv)  and this is not possible since $\|\mu^k\|=1$ for all $k\in\mathbb{N}$.   From \eqref{MFCQ-1} and \eqref{MFCQ-2}, we have
\begin{align*}
&\nabla g(\bar x)^\ast\tilde{\mu}=\lim\limits_{k\to\infty}\left[\nabla g(x^k)^* \tilde{\mu}^k+\frac{1}{\|\mu^k\|}\sum_{i=1}^m\lambda_i\nabla f_i(\bar x)\right]=0,  
\\
&\langle \tilde{\mu}, g(\bar x)\rangle=\lim\limits_{k\to\infty}\langle \tilde{\mu}^k, g(x^k)\rangle=0. 
\end{align*}
Therefore 
\begin{align*}
\langle \tilde{\mu}, \nabla g(\bar{x})d+t(\theta+g(\bar{x}))\rangle&=\langle\nabla g(\bar x)^*\tilde{\mu}, d\rangle+t\langle\tilde{\mu}, g(\bar x)\rangle+t\langle\tilde{\mu}, \theta\rangle
\\
&=t\langle\tilde{\mu}, \theta\rangle\geq 0,
\end{align*}
for all $d\in\mathbb{R}^n$, $\theta\in\Theta$ and $t\geq 0$. This means that
\begin{equation*}
\langle \tilde{\mu}, \nabla g(\bar{x})d+v\rangle\geq 0, \ \ \forall d\in\mathbb{R}^n, v\in \mathrm{cone}\,(\Theta+g(\bar x)).
\end{equation*}
Thanks to \eqref{RCQ}, we obtain
\begin{equation*}
\langle\tilde{\mu}, y\rangle\geq 0, \ \ \forall y\in Y.
\end{equation*}
This implies that $\tilde{\mu}=0$, a contradiction. Hence   the sequence $\{\mu^k\}$ is bounded. The poof is complete.  
\end{proof}
The following example shows that the BAKKT condition does not imply the RCQ condition.
\begin{example}
{\rm Consider the following problem:
\begin{align*}
	& \text{min}\, f(x) 
	\\
	&\text{s.t.}\ \ x\in \mathcal{F}:=\{x\in\mathbb{R}^2\,:\, g(x)\in -\Theta\},
\end{align*} 
where $\Theta=\mathbb{R}^3_+$ and
\begin{equation*}
f(x)=x_1, g(x)=(g_1(x), g_2(x), g_3(x)), g_1(x)=-x_1, g_2(x)= -x_2, g_3(x)= x_2,  
\end{equation*}
for all $x=(x_1, x_2)\in\mathbb{R}^2$. Then we have that the feasible set is  $\mathcal{F}=\mathbb{R}_+\times\{0\}$ and $\bar x=(0,0)$ is a minimum point of $f$ on $\mathcal{F}$. Since
\begin{equation*}
\nabla f(\bar x)=(1, 0), \nabla g_1(\bar x)=(-1, 0), \nabla g_2(\bar x)=(0, -1), \nabla g_3(\bar x)=(0, 1),
\end{equation*}
it is easy to check that $\bar x$ satisfies the KKT (BAKKT)  but not   the RCQ condition.
}
\end{example}
We are now introducing the weakest SCQ associated with the AKKT condition called the AKKT-regularity. Let $K(\cdot, \cdot)\colon \mathbb{R}^n\times\mathbb{R}_+\rightrightarrows \mathbb{R}^n$ be a set-valued mapping defined by
$$K(x, r):=\bigg\{\nabla g(x)^*\mu \;:\; \mu\in\Theta_+, |\langle\mu, g(x)\rangle|\leq r\bigg\},\ \ \ \forall x\in\mathbb{R}^n, r\in\mathbb{R}_+.$$
The set  $K(x, r)$ is as a perturbation of $K(\bar x, 0)$ around a given point $(\bar x, 0)$. This set is always nonempty and convex. Moreover, we have $K(x, \alpha r)=\alpha K(x, r)$ for all $\alpha>0$. 
\begin{definition}
{\rm We say that the {\em  AKKT-regularity} holds at $\bar x\in\mathcal{F}$ if and only if the set-valued mapping $K(\cdot, \cdot)$ is outer semicontinuous at $(\bar x, 0)$, that is,  
\begin{equation}\label{AKKT-regular}
\Limsup_{(x, r)\to (\bar x, 0)} K(x, r)\subset K(\bar x, 0),
\end{equation}
where
\begin{equation*}
\Limsup_{(x, r)\to (\bar x, 0)} K(x, r):=\{\bar w:\exists\, (x^k,  r_k, w^k)\to (\bar x, 0, \bar w)\ \ \text{with}\ \  w^k\in K(x^k, r_k),\,\forall k\in\mathbb{N}\}.
\end{equation*}
}
\end{definition}

The following theorem shows that the AKKT-regularity is the weakest  SCQ.
\begin{theorem}
A feasible point $\bar x$ is the AKKT-regularity if and only if for every continuously differentiable objective function $f$ in \eqref{problem} such that the AKKT condition holds at $\bar x$, we have that the KKT  condition also holds at $\bar x$. 
\end{theorem}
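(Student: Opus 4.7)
The plan is to prove both implications directly from the definition of AKKT-regularity, using the fact that the set-valued mapping $K$ captures precisely the combination $\nabla g(x)^*\mu$ with near-complementarity. The forward direction follows essentially by unwinding definitions, while the reverse direction requires a clever choice of objective function $f$ to convert an arbitrary element of the outer limit into an AKKT sequence.

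For the necessity direction ($\Rightarrow$), I assume AKKT-regularity at $\bar x$ and consider any $C^1$ objective $f$ for which $\bar x$ satisfies AKKT. This yields $\lambda\in\mathbb{R}^m_+$ with $\sum\lambda_i=1$ and sequences $\{x^k\}\subset\mathbb{R}^n$, $\{\mu^k\}\subset\Theta_+$ satisfying (A0)--(A2). Setting $w^k:=\nabla g(x^k)^*\mu^k$ and $r_k:=|\langle\mu^k,g(x^k)\rangle|$, conditions (A1) and (A2) give $w^k\to\bar w:=-\sum_{i=1}^m\lambda_i\nabla f_i(\bar x)$ and $r_k\to 0$, while by construction $w^k\in K(x^k,r_k)$. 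Then (A0) together with the AKKT-regularity inclusion \eqref{AKKT-regular} force $\bar w\in K(\bar x,0)$, which produces $\mu\in\Theta_+$ with $\langle\mu,g(\bar x)\rangle=0$ and $\nabla g(\bar x)^*\mu=\bar w$. Combining this with the definition of $\bar w$ gives exactly conditions (i)--(iii) of Definition \ref{def-KKT}.

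For the sufficiency direction ($\Leftarrow$), I argue by contrapositive, or equivalently pick an arbitrary $\bar w\in\Limsup_{(x,r)\to(\bar x,0)} K(x,r)$ and show $\bar w\in K(\bar x,0)$. By definition there exist sequences $(x^k,r_k,w^k)\to(\bar x,0,\bar w)$ and $\mu^k\in\Theta_+$ with $w^k=\nabla g(x^k)^*\mu^k$ and $|\langle\mu^k,g(x^k)\rangle|\le r_k$. The key idea is to manufacture an objective that makes this sequence an AKKT sequence: take $m=1$ and the linear (hence $C^1$) function $f_1(x):=-\langle \bar w,x\rangle$, with multiplier $\lambda_1=1$. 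Then $\nabla f_1(\bar x)=-\bar w$, and condition (A1) becomes $w^k-\bar w\to 0$, which holds by construction, while (A2) follows from $|\langle\mu^k,g(x^k)\rangle|\le r_k\to 0$; condition (A0) is immediate. Thus $\bar x$ satisfies AKKT for this choice of $f$, so by hypothesis the KKT condition holds at $\bar x$: there exists $\mu\in\Theta_+$ with $\langle\mu,g(\bar x)\rangle=0$ and $\nabla g(\bar x)^*\mu=\bar w$. This is precisely the statement $\bar w\in K(\bar x,0)$, establishing \eqref{AKKT-regular}.

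I expect the only subtle point to be recognizing that a single linear objective suffices to witness arbitrary elements of the outer limit; once one chooses $f_1(x)=-\langle\bar w,x\rangle$, everything matches by construction. Neither direction involves nontrivial analytic machinery beyond the definitions of $K$, AKKT, and KKT as already stated in the paper.
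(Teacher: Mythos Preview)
Your proposal is correct and follows essentially the same argument as the paper: in both directions you introduce $w^k=\nabla g(x^k)^*\mu^k$ and $r_k=|\langle\mu^k,g(x^k)\rangle|$ to match the definition of $K(x,r)$, and for the converse you pick the linear objective $f(x)=-\langle\bar w,x\rangle$, exactly as the paper does. The only cosmetic difference is that you explicitly declare $m=1$ and $\lambda_1=1$, whereas the paper leaves this implicit.
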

\begin{proof}
$(\Rightarrow):$ Assume that the AKKT-regularity holds at $\bar x\in\mathcal{F}$
and $f$ is an arbitrary continuously differentiable objective function such that the AKKT holds at $\bar x$. Then, there exist $\lambda\in\mathbb{R}^m_+$ with $\sum_{i=1}^{m}\lambda_i=1$, $\{x^k\}\subset \mathbb{R}^n$ and $\{\mu^k\}\subset \Theta_+$ such that $x^k\to \bar x$ and
\begin{align}
&\nabla g(x^k)^* \mu^k\to-\sum_{i=1}^m\lambda_i\nabla f_i(\bar x),\label{AKKT-regular-1}
 \\
&\langle \mu^k, g(x^k)\rangle\to 0.\label{AKKT-regular-2}
\end{align}
For each $k\in\mathbb{N}$, put $r_k:=|\langle \mu^k, g(x^k)\rangle| \ \ \text{and}\ \ w^k:=\nabla g(x^k)^* \mu^k.$  
Then, we have $w^k\in K(x^k, r_k)$ for all $k\in\mathbb{N}$. From \eqref{AKKT-regular-1}, \eqref{AKKT-regular-2} and the AKKT-regularity it follows that  
\begin{equation*}
-\sum_{i=1}^m\lambda_i\nabla f_i(\bar x)\in K(\bar x, 0).
\end{equation*}
Hence  $\bar x$ satisfies the KKT condition.

$(\Leftarrow):$   Let $\bar x\in \mathcal{F}$. Assume that for any continuously differentiable objective function $f$ in \eqref{problem} such that if the AKKT condition holds at  $\bar x$, then the KKT  condition also holds at this point. We claim that  $\bar x$ satisfies the AKKT-regularity, i.e., the inclusion \eqref{AKKT-regular} holds. Indeed, let $\bar w$ be an arbitrary element in the left-hand side of \eqref{AKKT-regular}. Then, there exists a sequence $\{(x^k,  r_k, w^k)\}$ converging to $(\bar x, 0, \bar w)$ such that $w^k\in K(x^k, r_k)$   for all $k\in\mathbb{N}$. Hence, for each $k\in\mathbb{N}$, there exists $\mu^k\in\Theta_+$ such that
\begin{equation*}
w^k=\nabla g(x^k)^*\mu^k\ \ \text{and}\ \   |\langle \mu^k, g(x^k)\rangle|\leq r_k.
\end{equation*} 
Let $f$ be the linear function defined by $f(x)=-\langle \bar w, x\rangle$ for all $x\in\mathbb{R}^n$. Since $w^k\to \bar w$ and $\nabla f(\bar x)=-\bar w$, one has 
$$\nabla g(x^k)^*\mu^k+\nabla f(\bar x)\to 0.$$ 
Moreover, since $r_k\to 0$, we have  $\langle \mu^k, g(x^k)\rangle\to 0.$ 
By assumption, the KKT condition holds at $\bar x$, i.e., there exists $\mu\in\Theta_+$ such that
\begin{equation*}
\nabla g(\bar x)^*\mu+\nabla f(\bar x)=0 \ \ \text{and}\ \ \langle \mu, g(x)\rangle=0.
\end{equation*}
Hence  
$$\bar w=-\nabla f(\bar x)\in K(\bar x, 0),$$ 
as required. 
\end{proof}

We finish this section by presenting an example to show that although the AKKT-regularity is the weakest constraint qualification associated with the AKKT condition, this condition alone does not imply the RCQ one.
\begin{example}
{\rm Let the feasible set $\mathcal{F}$ of \eqref{problem} be defined by
\begin{equation*}
\mathcal{F}:=\{x\in\mathbb{R}^2\;:\; g(x)\in -\Theta\},
\end{equation*}
where
\begin{equation*}
g(x)=(g_1(x), g_2(x)), g_1(x)=x_1, g_2(x)=x_1^2, \ \ \forall x=(x_1, x_2)\in\mathbb{R}^2,
\end{equation*}
and $\Theta=\{0\}\times\mathbb{R}_+$. Clearly, $\bar x=(0, 0)\in \mathcal{F}$. By direct calculations, we get
\begin{equation}\label{not-RCQ}
\nabla g_1(x)=(1, 0), \nabla g_2(x)=(2x_1, 0), \ \ \forall x=(x_1, x_2)\in\mathbb{R}^2,
\end{equation}  
and $\Theta_+=\mathbb{R}\times\mathbb{R}_+$. Hence 
\begin{align*}
g(\bar x)+\nabla g(\bar x)(\mathbb{R}^n)+\Theta=\mathbb{R}\times \mathbb{R}_+.
\end{align*}
This implies that $(0,0)$ is not an interior point of $g(\bar x)+\nabla g(\bar x)(\mathbb{R}^n)+\Theta$, i.e., the RCQ does not hold at $\bar x$.

Now let us prove that the AKKT-regularity holds at $\bar x$. Since \eqref{not-RCQ}, we have
\begin{align*}
K(x, r)&=\{(\mu_1+2x_1\mu_2, 0)\,:\, \mu_1\in\mathbb{R}, \mu_2\in \mathbb{R}_+, |\mu_1x_1+\mu_2x_1^3|\leq r\},\;\forall x\in\mathbb{R}^2, r\in\mathbb{R}_+,
\\
K(\bar x, 0)&=\{(\mu_1, 0)\,:\, \mu_1\in\mathbb{R}\}=\mathbb{R}\times\{0\}.
\end{align*}
Clearly, $K(x, r)\subset K(\bar x, 0)$ for all $x\in\mathbb{R}^2$ and  $r\in\mathbb{R}_+$. Thus the AKKT-regularity holds at $\bar x$.
}
\end{example}

\end{document}